\newtheorem{theorem}{Theorem}[section]
\newtheorem{lemma}[theorem]{Lemma}
\newtheorem{proposition}[theorem]{Proposition}
\theoremstyle{definition}
\theoremstyle{remark}
\numberwithin{equation}{section}
\begin{document}
\title[$L^p$-Liouville for SMMS]{On $L^p$-Liouville property for smooth metric measure spaces}
\author{Jia-Yong Wu}
\address{Department of Mathematics, Shanghai Maritime University, Haigang Avenue 1550, Shanghai 201306, P. R. China}\email{jywu81@yahoo.com}
\author{Peng Wu}
\address{Department of Mathematics, Cornell University, Ithaca, NY 14853, USA}\email{wupenguin@math.cornell.edu}
\date{September 15, 2014}
\subjclass[2000]{Primary 53C44; Secondary 58J35}
\keywords{Bakry-\'{E}mery Ricci curvature; smooth metric measure space; Liouville theorem}
\thanks{}
\begin{abstract}
In this short paper we study $L_f^p$-Liouville property with $0<p<1$ for nonnegative $f$-subharmonic functions on a complete noncompact smooth metric measure space $(M,g,e^{-f}dv)$ with $\mathrm{Ric}_f^m$ bounded below for $0<m\leq\infty$. We prove a sharp $L_f^p$-Liouville theorem when $0<m<\infty$. We also prove an $L_f^p$-Liouville theorem when $\mathrm{Ric}_f\geq 0$ and $|f(x)|\leq \delta(n) \ln r(x)$.
\end{abstract}
\maketitle

\section{Introduction}\label{Int1}

This is a sequel to our previous work \cite{[WuWu1],[WuWu2]}.
For Riemannian manifolds, sharp $L^p$-Liouville theorems with $p>1$, $p=1$, and $0<p<1$ for subharmonic functions were proved by Yau \cite{[Yau]}, Li \cite{[Li]}, Li and Schoen \cite{[Li-Sch]}, respectively. Recently Pigola, Rimoldi, and Setti \cite{[PRS]} proved a sharp $L_f^p$-Liouville theorem with $p>1$ for $f$-subharmonic functions on smooth metric measure spaces.
In \cite{[WuWu1],[WuWu2]}, the authors proved a sharp $L_f^1$-Liouville theorem on smooth metric measure spaces with ($\infty$-)Bakry-\'Emery Ricci curvature $\mathrm{Ric}^{\infty}_f=\mathrm{Ric}+\nabla^2f \geq 0$ by using the $f$-heat kernel estimates on smooth metric measure spaces. For the sake of completeness, in this paper we continue to study $L_f^p$-Liouville theorem with $0<p<1$ on smooth metric measure spaces.

Recall a smooth metric measure space $(M^n,g,e^{-f}dv_g)$ is an $n$-dimensional complete Riemannian manifold $(M,g)$ together with a weighted volume element $e^{-f}dv_g$ for some $f\in C^{\infty}(M)$. The associated $m$-Bakry-\'Emery Ricci curvature \cite{[BE]} is defined as
\[
\mathrm{Ric}_f^m=\mathrm{Ric}+\nabla^2 f-\frac{1}{m}df\otimes df,
\]
for $m\in\mathbb{R}\cup\{\pm\infty\}$. When $0<m<\infty$, the Bochner formula for $m$-Bakry-\'Emery Ricci curvature can be considered as the Bochner formula for the Ricci curvature of an $(n+m)$-dimensional manifold, therefore many analytic and geometric properties for manifolds with Ricci curvature bounded below can be extended to smooth metric measure spaces with $m$-Bakry-\'Emery Ricci curvature bounded below, see for example \cite{[BQ1],[LD],[WW0]} for details. When $m=\infty$, one denotes
\[
\mathrm{Ric}_f=\mathrm{Ric}_f^\infty=\mathrm{Ric}+\nabla^2 f.
\]
In particular, if $\mathrm{Ric}_f=\lambda g$ for some $\lambda\in\mathbb{R}$, then $(M,g,f)$ is a gradient Ricci soliton. 

The $f$-Laplacian $\Delta_f$ on $(M,g,e^{-f}dv)$, which is self-adjoint with respect to the weighted volume element, is defined by
\[
\Delta_f=\Delta-\nabla f\cdot\nabla.
\]
A function $u$ is called $f$-harmonic if $\Delta_f u=0$, and $f$-subharmonic if $\Delta_fu\geq 0$. 
The weighted $L^p$-norm (or $L_f^p$-norm) is defined as
\[
\|u\|_p=\left(\int_M |u|^pe^{-f}dv\right)^{1/p}
\]
for any $0<p<\infty$. We say that $u$ is $L_f^p$-integrable, i.e. $u\in L_f^p$, if $\|u\|_p<\infty$.

\vspace{0.4cm}

Recall the sharp $L^p$-Liouville theorem with $0<p<1$ for Riemannian manifolds proved by Li and Schoen \cite{[Li-Sch]},
\begin{theorem}[Li and Schoen \cite{[Li-Sch]}]\label{Li-Sch}
Let $(M^n,g)$ be an $n$-dimensional complete noncompact Riemannian manfold. There exists a constant $\delta(n)>0$ depending only on $n$ such that, if
\[
\mathrm{Ric}\geq-\delta(n)r^{-2}(x),
\]
as $r(x)\to \infty$, then any nonnegative $L^p$-integrable subharmonic function with $0<p<1$ must be identically zero.
\end{theorem}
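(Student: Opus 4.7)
The plan is to establish a Moser-type mean-value inequality for the subharmonic function $u$, then let the radius tend to infinity using the $L^p$-integrability hypothesis. First, for any $q \geq 1$, the function $u^q$ is again nonnegative and subharmonic, since $\Delta u^q = q u^{q-1}\Delta u + q(q-1) u^{q-2} |\nabla u|^2 \geq 0$. Testing this inequality against $\phi^2 u^q$ for a Lipschitz cutoff $\phi$ supported in a ball $B(x, R)$, equal to $1$ on $B(x, R/2)$, with $|\nabla\phi| \leq 2/R$, and integrating by parts yields the Caccioppoli estimate $\int \phi^2 |\nabla u^q|^2 \leq C R^{-2}\int_{B(x,R)} u^{2q}$. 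Combined with the local Sobolev inequality on $B(x, R)$---whose constant depends on $n$, $\mathrm{vol}(B(x, R))$, and $KR^2$ where $-K$ is the Ricci lower bound on the ball---a standard Moser iteration produces the mean-value inequality
\[
\sup_{B(x, R/2)} u \leq C(n, KR^2)\,\mathrm{vol}(B(x, R))^{-1/2}\,\|u\|_{L^2(B(x, R))}.
\]

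Next I would bootstrap this to the $L^p$ level with $0 < p < 1$. The elementary bound $\int u^q \leq (\sup u)^{q-p} \int u^p$, inserted into the $L^q$ mean-value estimate on a ball $B(x, R)$, gives a recursion of the form $M(R')^q \leq A \cdot M(R)^{q-p}$ for nested radii $R' < R$, where $A$ involves $\mathrm{vol}(B)^{-1}$ and $\|u\|_p^p$. Iterating over a suitably chosen geometric sequence of radii converging to $R/2$ absorbs the dependence on $M(R)$ and yields the improved mean-value inequality
\[
\sup_{B(x, R/2)} u \leq C(n, KR^2)\,\mathrm{vol}(B(x, R))^{-1/p}\,\|u\|_{L^p(B(x, R))}.
\]
Apply this with $R = r(x)/2$ as $r(x) \to \infty$. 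The curvature hypothesis gives $K \leq 4\delta(n)/r(x)^2$ on $B(x, R)$, so $KR^2 \leq \delta(n)$ and hence $C(n, KR^2)$ remains uniformly bounded. A Bishop--Gromov-type volume comparison with the decaying lower bound $-\delta(n)/r^2$ shows that for $\delta(n)$ sufficiently small $\mathrm{vol}(B(x, R)) \to \infty$ as $r(x) \to \infty$; combined with $\|u\|_p < \infty$, this yields $u(x) \to 0$ as $r(x) \to \infty$. By the maximum principle for the nonnegative subharmonic $u$ on arbitrarily large geodesic balls, $u \equiv 0$.

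The main difficulty is the bootstrap from $L^q$ with $q \geq 1$ to $L^p$ with $p < 1$. The naive route---testing $\Delta u \geq 0$ against $\phi^2(u+\epsilon)^{p-1}$ and integrating by parts---fails for $p < 1$, because the negative factor $p-1$ places the Dirichlet term on the wrong side of the inequality and no useful Caccioppoli bound can be extracted. The H\"older-absorption iteration on nested balls circumvents this, but requires carefully choosing the sequence of radii and exponents. A secondary obstacle is pinning down the explicit constant $\delta(n)$: it must be small enough both to keep the Moser/Sobolev constants $C(n, KR^2)$ bounded and for Bishop--Gromov to yield $\mathrm{vol}(B(x, R)) \to \infty$.
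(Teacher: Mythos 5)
Your overall architecture is the right one, and it matches the Li--Schoen strategy that the paper reproduces in Section \ref{sec2} for the weighted case: (i) a mean value inequality valid for all exponents $0<p<\infty$ (the H\"older-absorption iteration you describe for getting from $q\geq 1$ down to $p<1$ is exactly Li--Schoen's Theorem 2.1, and you correctly identify why the naive Caccioppoli test fails for $p<1$); (ii) a lower bound on $\mathrm{vol}(B(x,R))$ forcing $u(x)\to 0$; (iii) the maximum principle. The gap is in step (ii). You assert that ``a Bishop--Gromov-type volume comparison with the decaying lower bound $-\delta(n)/r^2$ shows that $\mathrm{vol}(B(x,R))\to\infty$ as $r(x)\to\infty$.'' Bishop--Gromov and its relative version give \emph{upper} bounds on volume ratios; by themselves they produce no lower bound on the volume of a ball centered at a point $x$ escaping to infinity, and there is no fixed ball contained in $B(x,r(x)/2)$ to compare against. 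This is not a routine omission: the volume lower bound is the heart of the proof and is precisely where the threshold $\delta(n)$ is determined.

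The missing idea (see the claim \eqref{kbdenshi} and estimate \eqref{keyineq} in Section \ref{sec2}) is a chaining argument along a minimal geodesic $\gamma$ from a fixed origin $o$ to $x$: one places centers $x_i=\gamma(t_i)$ with geometrically growing radii $\beta^i$, so that $B_{x_{i-1}}(\beta^{i-1})$ sits inside the annulus $B_{x_i}(\beta^i+2\beta^{i-1})\setminus B_{x_i}(\beta^i)$, and applies the \emph{relative annulus} comparison at each $x_i$. Because $r(o,x_i)\approx\beta^i$, the curvature scale $\beta^i\sqrt{K}$ on the $i$-th ball is $\lesssim\sqrt{\delta(n)}$, uniformly in $i$; this is exactly where the hypothesis $\mathrm{Ric}\geq-\delta(n)r^{-2}$ (rather than, say, $-\delta/r^{2-\epsilon}$) is used. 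Approximating $\sinh t$ by $t$ then gives $V(B_{x_i}(\beta^i))\geq D_i\,V(B_{x_{i-1}}(\beta^{i-1}))$ with $D_i\approx\beta^{n}/\bigl((\beta+2)^{n}-\beta^{n}\bigr)$, which exceeds $1$ once $\beta>2(2^{1/n}-1)^{-1}$ and $\delta(n)$ is small. Iterating yields $V(B_{x_k}(\beta^k))\geq C\lambda^k V(B_o(1))$ with $\lambda>1$, hence $\mathrm{vol}(B_x(\beta^{k+1}))\to\infty$, and only then does your inequality $u^p(x)\leq C\,\mathrm{vol}(B_x(R))^{-1}\|u\|_p^p$ close the argument. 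Without this construction (or an equivalent substitute) the proof does not go through; with it, your write-up becomes essentially the paper's proof.
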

Li and Schoen \cite{[Li-Sch]} constructed an explicit example showing that the curvature assumption in Theorem \ref{Li-Sch} is sharp. 
\vspace{0.4cm}

Motivated by Theorem \ref{Li-Sch}, we first prove a sharp $L_f^p$-Liouville theorem with $0<p<1$ for smooth metric measure spaces with $\mathrm{Ric}_f^m$ bounded below,
\begin{theorem}\label{liouv1}
Let $(M^n,g,e^{-f}dv)$ be an $n$-dimensional complete noncompact smooth metric measure space. For any $0<m<\infty$, there exists a constant $\delta(n, m)>0$ depending only on $n$ and $m$ such that, if
\[
\mathrm{Ric}^m_f\geq-\delta(n, m)\,r^{-2}(x),
\]
as $r(x)\to \infty$, then any nonnegative $L_f^p$-integrable $f$-subharmonic function with $0<p<1$ must be identically zero.
\end{theorem}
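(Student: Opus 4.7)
The plan is to follow the strategy used by Li and Schoen in proving Theorem \ref{Li-Sch}, adapted to the weighted setting via the $(n+m)$-dimensional geometric analogy that holds for $\mathrm{Ric}_f^m$ with $0<m<\infty$. The core of the argument is a weighted $L_f^p$ mean value inequality for nonnegative $f$-subharmonic functions, which, combined with divergence of the weighted volume, forces such a function to vanish identically if it lies in $L_f^p$.

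The first step is to establish the following $f$-mean value inequality: if $\mathrm{Ric}_f^m\geq-(n+m-1)K$ on the ball $B_R(x_0)$, then for any nonnegative $f$-subharmonic $u$ and any $0<p<1$,
\[
u^p(x_0)\leq\frac{C_1\,e^{C_2\sqrt{K}R}}{V_f(B_R(x_0))}\int_{B_R(x_0)}u^p e^{-f}\,dv,
\]
where $V_f(B_R(x_0))=\int_{B_R(x_0)}e^{-f}\,dv$ and $C_1,C_2$ depend only on $n,m,p$. This comes from a weighted Moser iteration. The weighted Sobolev and Poincar\'e inequalities on balls are available under $\mathrm{Ric}_f^m$ bounded below with $0<m<\infty$ (see \cite{[BQ1],[WW0]}) since the space has effective dimension $n+m$. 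Because $u^q$ is $f$-subharmonic for $q\geq 1$ by convexity, one runs the standard iteration for exponents above one, then extends the estimate to the range $0<p<1$ via the Bombieri-Giusti self-improvement trick.

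The second step is to apply the inequality on $B_R(x_0)$ with $K$ of order $\delta(n,m)/R^2$, which is admissible once $R$ is large because the hypothesis $\mathrm{Ric}_f^m\geq-\delta(n,m)r^{-2}(x)$ gives a curvature bound of this order on the ball. Then $\sqrt{K}R$ stays bounded by a constant multiple of $\sqrt{\delta(n,m)}$, and by choosing $\delta(n,m)$ small enough the mean value constant $C_1 e^{C_2\sqrt{K}R}$ is bounded uniformly in $R$. The $L_f^p$ hypothesis then yields
\[
u^p(x_0)\,V_f(B_R(x_0))\leq C(n,m,p)\,\|u\|_p^p<\infty.
\]
Since $M$ is noncompact and $\mathrm{Ric}_f^m$ is bounded below with $0<m<\infty$, the weighted Bishop-Gromov comparison together with an $f$-analog of the Calabi-Yau linear volume lower bound gives $V_f(B_R(x_0))\to\infty$ as $R\to\infty$, so letting $R\to\infty$ forces $u(x_0)=0$; since $x_0$ was arbitrary, $u\equiv 0$.

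The principal obstacle is producing the mean value inequality with the constant sharply controlled by $\sqrt{K}R$: this is what pins down the admissible value of $\delta(n,m)$ and requires careful bookkeeping in the weighted Moser iteration, particularly in descending to the sub-unit exponents $0<p<1$ via Bombieri-Giusti. A secondary issue is the divergence of $V_f(B_R(x_0))$, which under only the curvature hypothesis relies on the weighted volume lower bound for noncompact smooth metric measure spaces with $\mathrm{Ric}_f^m$ bounded below for $0<m<\infty$.
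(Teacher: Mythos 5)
Your Step 1 (the weighted mean value inequality with constant $e^{C\sqrt{K}R}$, following Li--Schoen) is consistent with what the paper uses. The gap is in Step 2. You apply the inequality on balls $B_R(x_0)$ centered at a \emph{fixed} point and claim that the hypothesis $\mathrm{Ric}^m_f\geq-\delta\,r^{-2}(x)$ ``gives a curvature bound of this order on the ball,'' i.e.\ $K\sim\delta/R^2$ on $B_R(x_0)$. That is false: $r(x)$ is the distance to a fixed origin, so the pointwise bound $-\delta\,r^{-2}(x)$ only improves at points far from the origin. Every ball $B_R(x_0)$ with $R$ large contains a fixed neighborhood of the origin, where the hypothesis gives no decay at all and the best uniform lower bound is some fixed constant $-\Lambda$. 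Hence $\sqrt{K}R\gtrsim\sqrt{\Lambda}\,R\to\infty$, the mean value constant $e^{C_2\sqrt{K}R}$ blows up exponentially in $R$, and no polynomial (or even linear) lower bound on $V_f(B_R(x_0))$ can compensate. So the inequality $u^p(x_0)V_f(B_R(x_0))\leq C\|u\|_p^p$ with $C$ independent of $R$ does not follow, and the argument collapses at exactly the point where $\delta(n,m)$ was supposed to be pinned down.

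The fix, which is what the paper (following Li--Schoen) actually does, is to apply the mean value inequality on balls centered at a point $x$ \emph{tending to infinity}, with radius $R$ comparable to a fraction of $r(x)$: there the hypothesis genuinely gives $K(x,R)\lesssim\delta\,r(x)^{-2}\sim\delta R^{-2}$, so $\sqrt{K}R\lesssim\sqrt{\delta}$ is uniformly small. The price is that one must now show $V_f(B_x(R))\to\infty$ for these \emph{moving} centers, which is not a Calabi--Yau bound at a fixed basepoint; the paper proves it by chaining balls of geometrically growing radii $\beta^i$ along a minimal geodesic from $o$ to $x$ and iterating the relative weighted volume comparison, obtaining $V_f(B_x(\beta^{k+1}))\geq C\lambda^k V_f(B_o(1))$ with $\lambda>1$. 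This gives $u(x)\to0$ as $r(x)\to\infty$, and one concludes $u\equiv0$ by the maximum principle rather than pointwise at an arbitrary $x_0$. (Separately, your appeal to a weighted Calabi--Yau linear growth bound under the perturbed hypothesis $\mathrm{Ric}^m_f\geq-\delta r^{-2}$ would itself need an argument of this chaining type, so even that ingredient is not off-the-shelf.)
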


In Section \ref{sec4}, we show, by constructing an explicit example, that Theorem \ref{liouv1} is sharp, in fact in our example $\mathrm{Ric}^m_f\approx -a\,r^{-2}(x)$ for some $a$ large enough.
\vspace{0.4cm}

For smooth metric measure spaces with $\mathrm{Ric}_f$ bounded below, the first author  \cite{[Wu]} proved an $L_f^p$-Liouville theorem with $0<p<1$ when $f$ is bounded and $\mathrm{Ric}_f\geq-\delta(n)\,r^{-2}(x)$ as $r(x)\to \infty$. In this paper we prove
\begin{theorem}\label{liouv2}
Let $(M^n,g,e^{-f}dv)$ be a complete noncompact smooth metric measure spaces with $\mathrm{Ric}_f\geq 0$. Then there exists a constant $\delta(n)$ depending only on $n$ such that, if
\[
|f(x)|\leq\delta(n)\ln r(x),
\]
as $r(x)\to \infty$, then any nonnegative $L_f^p$-integrable $f$-subharmonic function with $0<p<1$ must be identically zero.
\end{theorem}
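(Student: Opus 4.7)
The plan is to carry out a weighted analog of the Li--Schoen argument behind Theorem \ref{Li-Sch}. Let $u\ge 0$ be $f$-subharmonic with $\|u\|_p<\infty$ for some $0<p<1$. The key analytic step is to establish a weighted $L_f^p$ mean value inequality of the form
\[
u(x_0)^p \;\le\; \frac{C(n,p)}{V_f(B(x_0,R))}\int_{B(x_0,R)} u^p\,e^{-f}\,dv_g,
\]
valid for all $x_0\in M$ and all sufficiently large $R$, with $C(n,p)$ independent of $R$. Granting this, the theorem follows at once by letting $R\to\infty$: the numerator stays bounded by $\|u\|_p^p<\infty$, while $V_f(B(x_0,R))\to\infty$ because $M$ is complete and noncompact and the hypothesis $f(x)\le\delta(n)\ln r(x)$ forces $e^{-f(x)}\ge r(x)^{-\delta(n)}$, so $V_f(M)=\infty$.

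To prove the mean value inequality I would proceed by a two-step Moser-type argument. First, $u^s$ is $f$-subharmonic for every $s\ge 1$ (since $\Delta_f u^s = s u^{s-1}\Delta_f u + s(s-1)u^{s-2}|\nabla u|^2 \ge 0$), so combining a Caccioppoli estimate for $u^s$ with a weighted Sobolev inequality on balls and iterating in the usual Moser manner yields an $L_f^s\to L_f^\infty$ bound
\[
\sup_{B(x_0,R/2)} u \;\le\; C_s\left(\frac{1}{V_f(B(x_0,R))}\int_{B(x_0,R)} u^s\,e^{-f}\,dv_g\right)^{1/s}
\]
for any fixed $s\ge 1$. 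Second, the Li--Schoen reverse H\"older/interpolation trick converts this into the same estimate with exponent $p$ in place of $s$, which is exactly the mean value inequality sought. The two geometric inputs needed---weighted volume doubling on $B(x_0,R)$ and a weighted Sobolev inequality on $B(x_0,R)$---are provided by a Wei--Wylie type $f$-Laplacian comparison under $\mathrm{Ric}_f\ge 0$.

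The main obstacle is the $R$-dependence of these constants. The Wei--Wylie comparison with $|f|\le k$ yields doubling, Sobolev, and Moser iteration constants that typically grow like $R^{C\delta(n)}$ once $k=\delta(n)\ln R$ is itself allowed to grow with $R$, because the effective dimension in the comparison becomes $n+C\sup_{B(R)}|f|$. Accordingly $\delta(n)$ must be taken small enough that (i) the Moser iteration closes up with a constant independent of $R$, (ii) the reverse H\"older descent from exponent $s\ge 1$ down to $0<p<1$ still absorbs the mild logarithmic dependence, and (iii) the lower bound $e^{-f}\ge r^{-\delta(n)}$ leaves $V_f(M)=\infty$ intact. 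Choreographing these three smallness constraints simultaneously is the technical heart of the proof and determines the explicit threshold $\delta(n)$ claimed in the statement.
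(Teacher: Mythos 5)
Your overall strategy---a weighted mean value inequality at exponent $p<1$ combined with weighted volume growth---is the same as the paper's, which quotes the mean value inequality as Lemma \ref{mevain} (proved in \cite{[WuWu2]} by essentially the Moser-iteration-plus-Li--Schoen-descent scheme you sketch). The gap is in how you close the argument. Your requirement (i), that the mean value constant be independent of $R$, is not achievable and in fact contradicts your own (correct) observation that the Wei--Wylie comparison constants carry a factor $e^{cA}$ with $A=\sup_{B_o(3R)}|f|$: under $|f|\le\delta\ln r$ this factor is $\approx R^{c\delta}$, which is unbounded in $R$ for \emph{every} $\delta>0$, no matter how small. So you cannot conclude merely from $V_f(M)=\infty$; you need a quantitative lower bound on $V_f(B_o(R))$ that beats the growth $R^{c\delta}$ of the numerator. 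Moreover, your justification that $V_f(M)=\infty$ because $e^{-f}\ge r^{-\delta}$ is itself incomplete: under $\mathrm{Ric}_f\ge 0$ alone there is no a priori lower bound on the \emph{unweighted} volume (Calabi--Yau linear volume growth needs $\mathrm{Ric}\ge 0$), so the pointwise bound on $e^{-f}$ gives nothing by itself.

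The missing ingredient is the paper's Proposition \ref{volest}, a weighted Calabi--Yau estimate $V_f(B_o(r))\ge C\,r^{1-\delta(n)}$ obtained from the annulus form of the Wei--Wylie comparison (Theorem \ref{comp}) applied to $B_o(1)\subset B_x(r+1)\setminus B_x(r-1)$ for $x$ with $d(o,x)=r$. With that in hand the two $R$-dependences are played off against each other:
\begin{equation*}
\sup_{B_o(R/2)}u^p\le \frac{c_1R^{c_2\delta(n)}}{V_f(B_o(R))}\,\|u\|_p^p\le \frac{C}{R^{1-c_3\delta(n)}}\longrightarrow 0,
\end{equation*}
provided $\delta(n)<c_3^{-1}$; this balance, not an $R$-independent Moser constant, is where the threshold $\delta(n)$ actually comes from. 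Your proposal identifies the right difficulty but resolves it the wrong way, and omits the volume estimate that makes the resolution possible.
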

We expect Theorem \ref{liouv2} to be sharp too.

\vspace{0.5cm}

This rest of the paper is organized as follows. In Section \ref{sec2}, we prove Theorem \ref{liouv1} following the argument of Li and Schoen. In Section \ref{sec3}, we prove Theorem \ref{liouv2} by combining an $f$-mean value inequality \cite{[WuWu1],[WuWu2]} and an argument of Yau \cite{[Yau2]}. In Section \ref{sec4}, we construct an explicit example to illustrate the sharpness of Theorem \ref{liouv1}.

\textbf{Acknowledgement}.
Part of the work was done when the first author was visiting the Department of Mathematics at Cornell University, he greatly thanks Professor Xiaodong Cao for his help and the department for their hospitality.
The first author was partially supported by NSFC (11101267, 11271132) and the China Scholarship Council (201208310431). The second author was partially supported by an AMS-Simons postdoctoral travel grant.


\section{Proof of Theorem \ref{liouv1}}\label{sec2}

\begin{proof}[Proof of Theorem \ref{liouv1}]

Following the weighted Laplacian comparison theorem (for example \cite{[BQ2]}), the weighted volume comparison theorem, and an argument similar to that in the proof of Theorem 2.1 in Li and Schoen \cite{[Li-Sch]} (see also Theorem 5.3 in \cite{[Wu]}), we get a mean value inequality for $f$-subharmonic functions, if
$\mathrm{Ric}^m_f\geq-(n+m-1)K(x,5R)$, then
\begin{equation}\label{kbds}
\sup_{B_x(R/2)}u^p\leq  \frac{e^{c(1+\sqrt{K(x,5R)}R)}}{V_f(B_x(R))}
\int_{B_x(R)}u^p e^{-f}dv
\end{equation}
for nonnegative $f$-subharmonic function $u$ on $B_x(5R)$, where $c$ depends only on $n$, $m$ and $p$.

We will show that $u$ must vanish at infinity if $u\in L_f^p$ by proving that the growth rate of the $f$-volume is large enough under the assumption using the weighted volume comparison theorem, and by the maximum principle $u$ must be identically zero.

Let $x\in M$ and $\gamma$ be a minimal geodesic from $o$ to $x$, $\gamma(0)=o$ and $\gamma(T)=x$. Define $t_i\in[O,T]$, $0\leq i\leq k$, such that
\[
t_0=0,\quad t_1=1+\beta,\quad
\ldots,\quad t_i=2\sum^i_{j=0}\beta^j-1-\beta^i,
\]
for some $\beta>1$ to be determined later, and $k$ to be the number such that $t_k<T$ and $t_{k+1}\geq T$. Denote $x_i=\gamma(t_i)$, so they satisfy
\[
r(x_i,x_{i+1})=\beta^i+\beta^{i+1},\quad r(o,x_i)=t_i\quad
\mathrm{and}\quad r(x_k,x)<\beta^k+\beta^{k+1}.
\]
Moreover, the geodesic balls $B_{x_i}(\beta^i)$ cover $\gamma([0,2\sum^k_{j=0}\beta^j-1])$ and their interiors are disjoint.

We claim
\begin{equation}\label{kbdenshi}
V_f(B_{x_k}(\beta^k))\geq C
\left(\frac{\beta^{n+m}}{(\beta+2)^{n+m}-\beta^{n+m}}\right)^k
V_f(B_o(1))
\end{equation}
for a fixed
\[
\beta>\frac{2}{2^{\frac{1}{n+m}}-1}>1.
\]

Proof of the claim. For each $1\leq i\leq k$, by the relative comparison theorem (see (4.10) in \cite{[WW]}), we have
\begin{equation*}
\begin{aligned}
V_f(B_{x_i}(\beta^i))&\geq D_i\left[V_f(B_{x_i}(\beta^i+2\beta^{i-1}))
-V_f(B_{x_i}(\beta^i))\right]\\
&\geq D_iV_f(B_{x_{i-1}}(\beta^{i-1})),
\end{aligned}
\end{equation*}
where
\[
D_i=\frac{\int^{\beta^i\sqrt{K(x_i,\beta^i+2\beta^{i-1})}}_0\sinh^{n+m-1} tdt}
{\int^{(\beta^i+2\beta^{i-1})\sqrt{K(x_i,\beta^i+2\beta^{i-1})}}_{
\beta^i\sqrt{K(x_i,\beta^i+2\beta^{i-1})}}\sinh ^{n+m-1}tdt},
\]
since
\[
\mathrm{Ric}^m_f\geq-(n+m-1)K(x_i,\beta^i+2\beta^{i-1})
\]
on $B_{x_i}(\beta^i+2\beta^{i-1})$. Therefore,
\begin{equation}\label{sum}
V_f(B_{x_k}(\beta^k))\geq V_f(B_o(1))\prod^k_{i=1}D_i.
\end{equation}
Since $r(o,x_i)=2\sum^i_{j=0}\beta^j-1-\beta^i$, the curvature assumption implies that
\begin{equation*}
\begin{aligned}
\sqrt{K(x_i,\beta^i+2\beta^{i-1})}
&\leq\sqrt{\delta(n,m)}\left(2\sum^{i-2}_{j=0}\beta^j-1\right)^{-1}\\
&=\sqrt{\delta(n,m)}\,\frac{\beta-1}{2\beta^{i-1}-\beta-1}
\end{aligned}
\end{equation*}
for sufficiently large $i$. Hence
\begin{equation*}
\begin{aligned}
\beta^i\sqrt{K(x_i,\beta^i+2\beta^{i-1})}
\leq&\sqrt{\delta(n,m)}\frac{(\beta-1)\beta}{2-\beta^{2-i}-\beta^{1-i}}
\end{aligned}
\end{equation*}
which can be made arbitrarily small for a fixed $\beta>2\,(2^{1/(n+m)}-1)^{-1}>1$ by choosing $\delta(n,m)$ to be sufficiently small. So we get
\begin{equation*}
\begin{aligned}
D_i&\approx\frac{(\beta^i)^{n+m}}{(\beta^i+2\beta^{i-1})^{n+m}-(\beta^i)^{n+m}}\\
&=\frac{\beta^{n+m}}{(\beta+2)^{n+m}-\beta^{n+m}}
\end{aligned}
\end{equation*}
by simply approximating $\sinh t$ by $t$. Plugging into \eqref{sum} we proved the claim.

\vspace{0.5em}

Next we estimate $V_f(B_x(\beta^{k+1}))$, which will be divided into two cases.

Case 1: $r(x,x_k)\leq \beta^k(\beta-1)$. So $B_{x_k}(\beta^k)\subset B_x(\beta^{k+1})$, and
\[
V_f(B_{x_k}(\beta^k))\leq V_f(B_x(\beta^{k+1})).
\]
By \eqref{kbdenshi}, we get
\[
V_f(B_x(\beta^{k+1}))\geq C\left(\frac{\beta^{n+m}}
{(\beta+2)^{n+m}-\beta^{n+m}}\right)^kV_f(B_o(1)).
\]

Case 2: $r(x,x_k)>\beta^k(\beta-1)$. So $B_{x_k}(\beta^k)\subset B_x\big(r(x,x_k)+\beta^k\big)\backslash B_x\big(r(x,x_k)-\beta^k\big)$.
By the relative comparison theorem, we have
\begin{equation*}
\begin{aligned}
V_f(B_x(\beta^k))&\geq D\left[V_f(B_x\big(r(x,x_k)+\beta^k\big)
-V_f(B_x\big(r(x,x_k)-\beta^k\big)\right]\\
&\geq DV_f(B_{x_k}(\beta^k)),
\end{aligned}
\end{equation*}
where
\[
D=\frac{\int^{\beta^k\sqrt{K(x,r(x,x_k)+\beta^k)}}_0\sinh^{n+m-1} tdt}
{\int^{(r(x,x_k)+\beta^k)\sqrt{K(x,r(x,x_k)+\beta^k)}}_{
(r(x,x_k)-\beta^k)\sqrt{K(x,r(x,x_k)+\beta^k)}}\sinh^{n+m-1} tdt}.
\]
Since
\begin{equation*}
\begin{aligned}
(r(x,x_k)+\beta^k)\sqrt{K(x,r(x,x_k)+\beta^k)}&\leq
(\beta^{k+1}+2\beta^k)\sqrt{K(x,r(x,x_k)+\beta^k)}\\
&\leq\frac{\sqrt{\delta(n,m)}}{2}\cdot\beta(\beta-1)
\end{aligned}
\end{equation*}
can be made sufficiently small, so we get
\[
D\approx\frac{\beta^{n+m}}{(\beta+2)^{n+m}}.
\]
Combining with \eqref{kbdenshi} we get
\begin{equation*}
\begin{aligned}
V_f(B_x(\beta^{k+1}))&\geq\frac{C\beta^{n+m}}{(\beta+2)^{n+m}}
\left(\frac{\beta^{n+m}}
{(\beta+2)^{n+m}-\beta^{n+m}}\right)^{k+1}V_f(B_o(1))\\
&\geq \tilde{C}\left(\frac{\beta^{n+m}}
{(\beta+2)^{n+m}-\beta^{n+m}}\right)^kV_f(B_o(1)),
\end{aligned}
\end{equation*}
where $\tilde{C}$ depends only on $n$ and $\beta$.

Therefore, we have
\begin{equation}
\begin{aligned}\label{keyineq}
V_f(B_x(\beta^{k+1}))\geq C\left(\frac{\beta^{n+m}}
{(\beta+2)^{n+m}-\beta^{n+m}}\right)^kV_f(B_o(1)).
\end{aligned}
\end{equation}

Next let $r(x)\to \infty$, then $k\to \infty$. By the choice of $\beta$,
\[
\frac{\beta^{n+m}}{(\beta+2)^{n+m}-\beta^{n+m}}>1,
\]
so the right hand side of \eqref{keyineq} approaches to infinity. Let $R=\beta^{k+1}$,  by the assumption, $R\sqrt{K(x,5R)}$ is bounded from above. Therefore by \eqref{kbds}, we have
\begin{equation}\label{contr}
u^p(x)\leq CV^{-1}_f(B_x(R)),
\end{equation}
where $C$ depends on $n$, $m$, $p$ and $\|u\|_p$. Therefore $u(x)\to 0$ as $r(x)\to\infty$, and Theorem \ref{liouv1} follows from the maximum principle.
\end{proof}


\section{Proof of Theorem \ref{liouv2}}\label{sec3}

First recall the $f$-volume comparison theorem proved by Wei and Wylie \cite{[WW]},
\begin{theorem}\label{comp}
Let $(M^n,g,e^{-f}dv)$ be an $n$-dimensional complete noncompact smooth metric measure space. If $\mathrm{Ric}_f\geq0$, then for any $x\in B_o(R)$,
\[
\frac{V_f(B_x(R_1,R_2))}{V_f(B_x(r_1,r_2))}\leq
e^{4A}\frac{R^n_{2}-R^n_{1}}{r^n_{2}-r^n_{1}}
\]
for any $0<r_1<r_2,\ 0<R_1<R_2<R$, $r_1\leq R_1,\ r_2\leq R_2<R$, where $B_x(R_1,R_2)=B_x(R_2)\backslash B_x(R_1)$, and $A(R)=\sup_{x\in B_o(3R)}|f|(x)$.
\end{theorem}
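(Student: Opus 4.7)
The plan is to follow the approach of Wei--Wylie \cite{[WW]}: first establish a pointwise (radial) comparison for the weighted volume element $\mathcal{A}_f$, then upgrade it to the annular ratio comparison via the Bishop--Gromov double integration trick. Working in geodesic polar coordinates centered at $x$, I would write $e^{-f}dv = \mathcal{A}_f(t,\theta)\,dt\,d\theta$ with $\mathcal{A}_f(t,\theta) = e^{-f(\gamma_\theta(t))}\mathcal{A}(t,\theta)$, where $\mathcal{A}$ is the unweighted Jacobian of the exponential map along a minimizing radial geodesic $\gamma_\theta$. The weighted mean curvature along the radial direction is then $m_f(t,\theta) = (\log \mathcal{A}_f)'(t,\theta) = m(t,\theta) - f'(t,\theta)$, where $m$ is the classical mean curvature of the geodesic sphere and $f' := \tfrac{d}{dt} f(\gamma_\theta(t))$.

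The Bochner formula applied to the distance function, combined with the hypothesis $\mathrm{Ric}_f \geq 0$, gives the Riccati-type inequality
\[
m_f'(t,\theta) + \frac{(m_f(t,\theta)+f'(t,\theta))^2}{n-1} \leq 0.
\]
Comparing with the Euclidean model $m_{\mathrm{E}}(t) = (n-1)/t$ that solves the corresponding equality, and converting the $f'$ contribution into the endpoint difference $f(\gamma_\theta(t)) - f(\gamma_\theta(s))$ (of absolute value at most $2A$) via integration by parts, one obtains the pointwise almost-monotonicity
\[
\frac{\mathcal{A}_f(t,\theta)}{t^{n-1}} \;\leq\; e^{2A}\,\frac{\mathcal{A}_f(s,\theta)}{s^{n-1}}, \qquad 0 < s \leq t \leq R,
\]
along any ray $\gamma_\theta$ that is minimizing up to $t$; beyond the cut locus one extends $\mathcal{A}_f$ by zero, which preserves the inequality.

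To pass to the annular ratio, I would write $V_f(B_x(r_1,r_2)) = \int_{S^{n-1}} \int_{r_1}^{r_2} g_f(s,\theta)\, s^{n-1}\,ds\,d\theta$ with $g_f := \mathcal{A}_f/t^{n-1}$, and similarly for the outer annulus, then insert the pointwise bound into a double integral over $(s,t) \in [r_1,r_2]\times[R_1,R_2]$. Splitting the product domain according to whether $s \leq t$ or $s > t$ --- the overlap region $\{s>t\}$ being nonempty precisely when $R_1 < r_2$ --- one handles the piece $\{s \leq t\}$ by direct insertion of the pointwise inequality, while the piece $\{s > t\}$ is absorbed by a relabeling argument that cancels against a matching sub-region on the inner side. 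The accumulated multiplicative error then produces the factor $e^{4A}$ in place of $e^{2A}$, and integration over $\theta \in S^{n-1}$ delivers the stated inequality. The main obstacle is the first step: the cross term $2 m_f f'$ inside $(m_f+f')^2$ prevents a direct comparison with the Euclidean Riccati equation, and one must reformulate the estimate (via an integrating factor, or equivalently a clever logarithmic substitution on $\mathcal{A}_f$) so that only the $L^\infty$ bound on $f$ --- not a gradient bound, which is unavailable under the hypotheses --- is required to close the argument. Once the radial comparison is in hand, the double-integration upgrade is algebraic, and the assumptions $r_i \leq R_i$ and $R_2 < R$ are exactly what guarantee that the almost-monotonicity of $g_f$ applies uniformly on the relevant range.
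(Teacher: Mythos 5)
First, a point of reference: the paper does not prove this statement at all --- it is quoted verbatim from Wei--Wylie \cite{[WW]} (their annulus volume comparison), so your reconstruction has to be measured against the argument in \cite{[WW]}. Your architecture is indeed theirs (pointwise radial comparison for $\mathcal{A}_f$, then a Gromov-type double integration for annuli), and your Riccati inequality $m_f'+\frac{(m_f+f')^2}{n-1}\leq 0$ is correct. But the proposal has a genuine gap exactly where you yourself flag ``the main obstacle'': the step converting the $f$-dependence into something controlled by $\sup|f|$ alone is asserted, not proved, and the specific claim you make for it --- that one gets the endpoint difference $f(\gamma_\theta(t))-f(\gamma_\theta(s))$, hence a pointwise constant $e^{2A}$ --- is not what the argument yields. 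The actual mechanism is: writing $m=m_f+f'$ for the unweighted mean curvature, one has $\bigl(t^2(m(t)-\tfrac{n-1}{t})\bigr)'\leq t^2 f''(t)$, and \emph{two} integrations by parts give
\[
m_f(t)\;\leq\;\frac{n-1}{t}-\frac{2f(t)}{t}+\frac{2}{t^2}\int_0^t f(\tau)\,d\tau,
\]
so that, upon integrating $m_f-\tfrac{n-1}{t}$ from $s$ to $t$, the terms $-\tfrac{2f}{t}$ cancel telescopically against the derivative of $\tfrac{2}{t}\int_0^t f$, leaving
\[
\log\frac{\mathcal{A}_f(t,\theta)/t^{n-1}}{\mathcal{A}_f(s,\theta)/s^{n-1}}
\;\leq\;-\frac{2}{t}\int_0^t f+\frac{2}{s}\int_0^s f\;\leq\;4A.
\]
The $f''$ contribution produces \emph{averaged} values of $f$ (two boundary terms, each bounded by $2A$), not an endpoint difference; an endpoint-difference bound would amount to monotonicity of the unweighted $\mathcal{A}/t^{n-1}$, which fails since $\mathrm{Ric}_f\geq 0$ allows $\mathrm{Ric}$ to be very negative. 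So the pointwise constant is already $e^{4A}$, and without supplying this computation your proof has no quantitative content at its core.

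Consequently your bookkeeping for the factor $4$ is also wrong. In the double-integration step, every pair $(s,t)\in[r_1,r_2]\times[R_1,R_2]$ with $s>t$ has both coordinates in the overlap $[R_1,r_2]$, and since $r_1\leq R_1$ and $r_2\leq R_2$, the swap $(s,t)\mapsto(t,s)$ maps that region into $\{s\leq t\}\subset[r_1,r_2]\times[R_1,R_2]$; the relabeling therefore costs \emph{no} extra multiplicative constant. Had your pointwise bound $e^{2A}$ been correct, the theorem would follow with $e^{2A}$ --- the degradation from $2A$ to $4A$ you attribute to the $\{s>t\}$ region does not occur there, and the true source of $4A$ is the two averaged-$f$ boundary terms displayed above. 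To repair the proposal, replace the unproved ``integrating factor / logarithmic substitution'' step with the explicit double integration by parts of $t^2f''$, and note that the hypothesis $A=\sup_{B_o(3R)}|f|$ with $x\in B_o(R)$, $R_2<R$ is what ensures $|f|\leq A$ along all the radial geodesics used.
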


Applying Theorem \ref{comp}, we get the following  $f$-volume growth estimate.
\begin{proposition}\label{volest}
Let $(M,g,e^{-f}dv)$ be an $n$-dimensional complete noncompact smooth metric measure space with $\mathrm{Ric}_f\geq 0$. If there exists a sufficiently small constant $\delta(n)>0$ depending only on $n$ such that
\[
|f(x)|\leq \delta(n) \ln r(x),
\]
as $r(x)\to \infty$. Then there exists a constant $C>0$ such that for $r$ sufficiently large,
\[
V_f(B_o(r))\geq C\,r^{1-\delta(n)}.
\]
\end{proposition}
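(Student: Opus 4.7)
The plan is to apply Theorem~\ref{comp} along a ray emanating from $o$ and then package the resulting pointwise $f$-volume estimates into the claimed global growth rate. Since $M$ is complete and noncompact, there exists a ray $\gamma:[0,\infty)\to M$ with $\gamma(0)=o$; for each $t>0$, set $x_t=\gamma(t)$.

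First I apply Theorem~\ref{comp} at $x=x_t$ with $r_1=0$, $r_2=1$, $R_1=0$, $R_2=t+1$ (and $R$ slightly larger than $t+1$). Using the containment $B_o(1)\subset B_{x_t}(t+1)$, which holds because $d(o,x_t)=t$, this yields
\[
V_f(B_{x_t}(1))\ge e^{-4A(t+1)}(t+1)^{-n}V_f(B_o(1)).
\]
The hypothesis $|f(x)|\le\delta(n)\ln r(x)$ controls $A(t+1)=\sup_{B_o(3(t+1))}|f|\le\delta(n)\ln(3(t+1))$ for $t$ sufficiently large, so $e^{-4A(t+1)}\ge C(t+1)^{-4\delta(n)}$, and hence $V_f(B_{x_t}(1))\ge C(t+1)^{-n-4\delta(n)}$. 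The exponential factor from $A$ produces only a mild polynomial loss, as expected.

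The final step is to combine these pointwise estimates to obtain $V_f(B_o(r))\ge Cr^{1-\delta(n)}$, by adding $f$-volumes of suitably chosen disjoint balls along $\gamma$ and contained in $B_o(r)$. I expect this to be the main obstacle. A uniform packing of unit balls at spacing two gives only $V_f(B_o(r))\ge C\sum_{k\le r/2}k^{-n-4\delta(n)}$, which is bounded in $r$ for $n\ge 2$. The correct approach is to replace unit balls by balls whose radius grows proportionally to the distance from $o$, so that the unfavorable factor $(t+1)^{-n}$ in the small-ball comparison is absorbed by the volume captured at larger scales; equivalently, one applies Theorem~\ref{comp} on annular regions along the ray and tracks the total loss $e^{4A}\le Cr^{4\delta(n)}$. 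The target exponent $1-\delta(n)$ is precisely what results from the (essentially linear) growth available in the limit $\delta(n)\to 0$, corrected by the $r^{-\delta(n)}$ factor coming from the logarithmic growth of $|f|$.
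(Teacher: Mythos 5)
Your first step is correct but points in an unproductive direction, and you candidly identify the resulting obstacle without resolving it: the lower bound $V_f(B_{x_t}(1))\gtrsim (t+1)^{-n-4\delta(n)}$ decays too fast in $t$ for any packing of disjoint balls along the ray to accumulate growing volume, and the proposed fix (``balls whose radius grows proportionally to the distance'' / ``annular regions along the ray'') is left as a gesture rather than an argument. The final paragraph asserts the target exponent $1-\delta(n)$ without deriving it. So as written this is a genuine gap: the entire content of the proposition is the step you defer.

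The missing idea is that no summation over a ray is needed at all; the bound follows from a \emph{single} application of the annulus-to-ball comparison centered at a far point, in the spirit of Yau's linear volume growth theorem. Fix $x$ with $d(o,x)=r\ge 2$ and apply Theorem~\ref{comp} at $x$ with $R_1=r-1$, $R_2=r+1$, $r_1=0$, $r_2=r-1$, obtaining
\[
\frac{V_f\bigl(B_x(r+1)\bigr)-V_f\bigl(B_x(r-1)\bigr)}{V_f\bigl(B_x(r-1)\bigr)}
\;\le\; e^{4A}\,\frac{(r+1)^n-(r-1)^n}{(r-1)^n}\;\le\;\frac{C(n)}{r^{1-c\,\delta(n)}},
\]
where the hypothesis on $f$ gives $e^{4A}\le C(n)\,r^{4\delta(n)}$. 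Since $d(o,x)=r$, the fixed ball $B_o(1)$ sits inside the thin annulus $B_x(r+1)\setminus B_x(r-1)$, so the numerator is at least $V_f(B_o(1))>0$; rearranging yields $V_f\bigl(B_x(r-1)\bigr)\ge C^{-1}V_f(B_o(1))\,r^{1-c\,\delta(n)}$, and $B_x(r-1)\subset B_o(2r-1)$ finishes the proof. The point you missed is that the comparison should be read as forcing the \emph{large} ball $B_x(r-1)$ to be big because the annulus containing $B_o(1)$ is proportionally thin, rather than as forcing small balls far out to be not too small.
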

\begin{proof}
Let $x\in M$ be a point with $d(x,o)=r\geq2$. Choosing $R_2=r+1$, $R_1=r-1$, $r_2=r-1$ and $r_1=0$, by Theorem \ref{comp} we have
\begin{equation*}
\begin{aligned}
\frac{V_f(B_x(r+1))-V_f(B_x(r-1))}{V_f(B_x(r-1))}&\leq c(n)(r+1)^{\delta(n)}\,\frac{(r+1)^n-(r-1)^n}{(r-1)^n}\\
&\leq\frac{C(n)}{r^{1-\delta(n)}}
\end{aligned}
\end{equation*}
Since $B_o(1)\subset B_x(r+1)\setminus B_x(r-1)$ and $B_x(r-1)\subset B_o(2r-1)$, therefore we have
\begin{equation*}
\begin{aligned}
V_f(B_o(2r-1))&\geq V_f(B_x(r-1))\\
&\geq\frac{V_f(B_o(1))}{C(n)}r^{1-\delta(n)}.
\end{aligned}
\end{equation*}
\end{proof}

Next recall a weighted mean value inequality on complete smooth metric measure spaces proved by the authors \cite{[WuWu2]}.
\begin{lemma}\label{mevain}
Let $(M,g,e^{-f}dv)$ be an $n$-dimensional complete noncompact smooth metric measure space with $\mathrm{Ric}_f\geq 0$. Let $u$ be a smooth positive subsolution to the $f$-heat equation in $B_o(R)$. For $0<p<\infty$, there exist constants $c_1(n,p)$ and $c_2(n,p)$ such that,
\[
\sup_{B_o(R/2)}u^p\leq
\frac{c_1e^{c_2A}}{V_f(B_o(R))}\int_{B_o(R)}u^p\,\,\, e^{-f} dv.
\]
\end{lemma}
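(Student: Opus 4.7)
The proof proceeds by Moser iteration, adapted to the weighted setting. Three ingredients are required: a local weighted Sobolev inequality on $B_o(R)$, a weighted Caccioppoli inequality for positive subsolutions, and a self-improvement step to cover the range $0<p<1$. For the Sobolev inequality: the hypothesis $\mathrm{Ric}_f\geq 0$ combined with Theorem \ref{comp} (taking $R_1=r_1=0$, $R_2=2r_2$) gives local weighted volume doubling on $B_o(R)$ with constant $2^n e^{4A}$. A weighted Neumann--Poincaré inequality holds with constants of the same form (via a Buser-type argument for the weighted Laplacian). Saloff-Coste's abstract theorem then produces, for some $\kappa>1$ depending only on $n$,
\[
\Bigl(\int \phi^{2\kappa} e^{-f}dv\Bigr)^{1/\kappa} \leq \frac{C e^{cA} R^2}{V_f(B_o(R))^{1-1/\kappa}} \int \bigl(|\nabla\phi|^2 + R^{-2}\phi^2\bigr) e^{-f}dv
\]
for all $\phi\in C_0^\infty(B_o(R))$.

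Second, for a positive subsolution $u$ with $\Delta_f u \geq 0$ and any $q\geq 1$, test the differential inequality against $\phi^2 u^{q-1}$, where $\phi$ is a spatial cutoff adapted to concentric balls $B_{r_1}\subset B_{r_2}\subset B_o(R)$, and integrate by parts against $e^{-f}dv$, using the self-adjointness of $\Delta_f$ with respect to this weight. After absorbing terms via Cauchy--Schwarz, this yields the weighted Caccioppoli inequality
\[
\int \phi^2 |\nabla u^{q/2}|^2 e^{-f}dv \leq \frac{Cq}{(r_2-r_1)^2} \int_{B_{r_2}} u^q e^{-f}dv.
\]
Feeding this into the Sobolev inequality applied to $\phi u^{q/2}$ and rearranging produces the reverse Hölder recursion
\[
\Bigl(\int_{B_{r_1}} u^{\kappa q} e^{-f}dv\Bigr)^{1/\kappa q} \leq \Bigl(\frac{Cq e^{cA}}{(r_2-r_1)^2}\Bigr)^{1/q} \Bigl(\int_{B_{r_2}} u^q e^{-f}dv\Bigr)^{1/q}.
\]
Iterating with $q_k=\kappa^k q_0$ and radii $R_k=(R/2)(1+2^{-k})$, the product of log-constants telescopes to a finite quantity, yielding the claimed estimate with exponent $q_0$ for every $q_0\geq 1$. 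To extend to $0<p<1$ I would use the standard telescoping self-improvement argument: write $\int u^{q_0} e^{-f}dv \leq (\sup_{B_R}u)^{q_0-p}\int u^p e^{-f}dv$, iterate against a sequence of shrinking radii, and apply Young's inequality to absorb the supremum factor into the left-hand side, thereby lowering the effective exponent to any $p>0$ at the cost of a constant depending on $p$.

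The main technical point is to ensure that the factor $e^{cA}$ arising from volume doubling does not compound through the Moser iteration into something like $e^{cA\log(1/(R_k-R_{k+1}))}$. This is guaranteed because $A=\sup_{B_o(3R)}|f|$ is a single global constant held fixed throughout, so every scale of the iteration carries the same exponential factor in its Sobolev constant; summing the corresponding logarithms over the geometric sequence $\{q_k\}$ picks up $e^{cA}$ only once (raised to a power bounded in terms of $n$ and $p$), so that the final estimate takes the form $c_1 e^{c_2 A}$ with $c_1,c_2$ depending only on $n$ and $p$, as claimed.
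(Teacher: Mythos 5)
The paper does not actually prove this lemma here---it is quoted verbatim from the authors' earlier work \cite{[WuWu2]}, where it is established exactly by the route you describe: a local weighted Neumann--Poincar\'e inequality and volume doubling with constant $e^{cA}$ under $\mathrm{Ric}_f\geq 0$, the resulting local Sobolev inequality via Saloff-Coste, Moser iteration, and the Li--Schoen telescoping trick for $0<p<1$. Your argument is essentially the same and is sound; the only minor imprecision is the Caccioppoli constant, which is $Cq^2/(q-1)$ rather than $Cq$ and degenerates as $q\to 1^+$, but this is harmless since the iteration can be started at $q_0=2$ and the range $0<p<2$ is then recovered by your final self-improvement step.
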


\vspace{1em}

Now we apply Lemma \ref{mevain} and Proposition \ref{volest} to prove Theorem \ref{liouv2}.
\begin{proof}[Proof of Theorem \ref{liouv2}]
Under the assumption, by Lemma \ref{mevain} we have the following mean value inequality
\[
\sup_{B_o(R/2)}u^p\leq  \frac{c_1\,R^{c_2\delta(n)}}{V_f(B_o(R))}
\int_{B_o(R)}u^p e^{-f}dv
\]
where constant $c_1$ and $c_2$ depend on $n$ and $p$. By Proposition \ref{volest}, we have
\begin{equation}\label{contr2}
\sup_{B_o(R/2)}u^p\leq \frac{C}{R^{1-c_3\delta(n)}},
\end{equation}
where constant $c_3$ depends on $n$ and $p$, and $C$ depends on $n$, $p$ and the $L_f^p$-norm of $u$. Taking $R\rightarrow\infty$, then $u(x)\to 0$ as long as $\delta(n)<c_3^{-1}$, and Theorem \ref{liouv2} follows.
\end{proof}


\section{An example}\label{sec4}

In this section, we construct an example to illustrate the sharpness of Theorem \ref{liouv1}. 

Consider the Euclidean space $(\mathbb{R}^2,g_0)$. Let the potential function be $f=a \ln r$ when $r\geq 2$ for some constant $a>0$, and extend it smoothly to $r<2$. The $f$-Laplacian is given by
\[
\Delta_fu=u_{rr}+\Big(\frac 1r-f'\Big)u_r+\frac{1}{r^2}u_{\theta\theta}.
\]

It is a direct computation that, when $r\geq 2$
\begin{equation*}
\begin{split}
{\mathrm{Ric}^m_f}_{11}=&f''-\frac 1m f'^2=-a\left(1+\frac am\right)\frac {1}{r^2},\\
{\mathrm{Ric}^m_f}_{22}=&\frac 1r f'=\frac{a}{r^2},\\
{\mathrm{Ric}^m_f}_{12}=&0,
\end{split}
\end{equation*}
that is
\[
\mathrm{Ric}^m_f\geq-a\left(1+\frac am\right)r^{-2}.
\]

It is easy to verify that $u(r)=r^a$ is an $f$-harmonic function with
\begin{equation}
\begin{aligned}\label{lpnorm}
\|u\|_p^p&=\int^{\infty}_2\int^{2\pi}_0|u|^p\, r\,e^{-f}d\theta dr+\int^2_0\int^{2\pi}_0|u|^p\, r\,e^{-f}d\theta dr\\
&=\int^{\infty}_2\int^{2\pi}_0r^{ap}\, r\,e^{-a\ln r}d\theta dr+C\\
&=2\pi\int^{\infty}_2r^{ap-a+1}dr+C.
\end{aligned}
\end{equation}
For any $0<p<1$, if $a>2/(1-p)$, then the integral is finite and $u$ is an $L^p_f$-integrable $f$-harmonic function on $(\mathbb{R}^2,g_0,r^{-a}dv)$.

\bibliographystyle{amsplain}

\begin{thebibliography}{30}

\bibitem{[BE]}D. Bakry, M. Emery, \emph{Diffusion hypercontractivitives}, in S\'{e}minaire de Probabilit\'{e}s XIX, 1983/1984,
in: Lecture Notes in Math., vol. 1123, Springer-Verlag, Berlin, 1985, 177-206.

\bibitem{[BQ1]}D. Bakry, Z.-M. Qian, \emph{Some new results on eigenvectors via dimension, diameter and Ricci curvature},
Adv. Math. \textbf{155} (2000), 98--153.

\bibitem{[BQ2]}D. Bakry, Z.-M. Qian, \emph{Volume comparison theorems without Jacobi fields},
in Current Trends in Potential Theory, 115-122. Theta Ser. Adv. Math., 4. Theta, Bucharest (2005).

\bibitem{[Bri]} K. Brighton, \emph{A Liouville-type theorem for smooth metric measure spaces}, J. Geome. Anal. \textbf{23} (2013), 562--570.

\bibitem{[CGT]} J. Cheeger, M. Gromov, M. Taylor, \emph{Finite propagation speed, kernel estimates for functions of the Laplace operator,
and the geometry of complete Riemannian manifolds}, J. Differential Geom. \textbf{17} (1982), 15--53.

\bibitem{[GW]} R. E. Greene, H. Wu, \emph{Integrals of subharmonic functions on manifolds of nonnegative curvature},
Invent. Math. \textbf{27} (1974), 265--298.

\bibitem{[Ham]} R. Hamilton, \emph{The formation of singularities in the Ricci flow},
Surveys in Differential Geom. \textbf{2} (1995), 7--136, International Press.

\bibitem{[Li]} P. Li, \emph{Uniqueness of $L^1$ solutions for the Laplace equation and the heat equation on Riemannian manifolds},
J. Differential Geom. \textbf{20} (1984), 447--457.

\bibitem{[Li-Sch]}P. Li, R. Schoen, \emph{$L^p$ and mean value properties of subharmonic functions on Riemannian manifolds},
Acta Math. \textbf{153} (1984), 279--301.

\bibitem{[LD]}X.-D. Li, \emph{Liouville theorems for symmetric diffusion operators on complete Riemannian manifolds},
J. Math. Pure. Appl. \textbf{84} (2005), 1295-1361.

\bibitem{[Lott1]}J. Lott, \emph{Some geometric properties of the Bakry-\'{E}mery-Ricci tensor}, Comment. Math. Helv. \textbf{78} (2003), 865--883.

\bibitem{[PRS]} S. Pigola, M. Rimoldi, A. G. Setti,\emph{ Remarks on non-compact gradient Ricci solitons}, Math. Z. \textbf{268} (2011), 777--790.

\bibitem{[WW0]}G. Wei, W. Wylie, \emph{Comparison geometry for the smooth metric measure spaces}, Proceedings of the 4th International Congress of Chinese
Mathematicians, \textbf{2} (2007), 191--202.

\bibitem{[WW]}G. Wei, W. Wylie, \emph{Comparison geometry for the Bakry-\'{E}mery Ricci tensor}, J. Differential Geom. \textbf{83} (2009), 377--405.

\bibitem{[Wu]} J.-Y. Wu, \emph{$L^p$-Liouville theorems on complete smooth metric measure spaces}, Bull. Sci. Math. \textbf{138} (2014), 510--539.

\bibitem{[WuWu1]} J.-Y. Wu, P. Wu, \emph{Heat kernel on smooth metric measure spaces with nonnegative curvature}, arxiv:math.DG/1401.6155.

\bibitem{[WuWu2]}J.-Y. Wu, P. Wu, \emph{Heat kernel on smooth metric measure spaces and applications}, arXiv:math.DG/1406.5801.

\bibitem{[Yau]}S.-T. Yau, \emph{Harmonic functions on complete Riemannian manifolds}, Comm Pure Appl Math. \textbf{28} (1975), 201--228.

\bibitem{[Yau2]} S.-T. Yau, \emph{Some function-theoretic properties of complete Riemannian manifolds and their applications to geometry},
Indiana Univ. Math. J. \textbf{25} (1976), 659--670.
\end{thebibliography}

\end{document}